\newtheorem{theorem}{Theorem}
\newtheorem{lemma}[theorem]{Lemma}
\newtheorem{definition}{Definition}
\newtheorem{assumption}{Assumption}
\DeclareRobustCommand\widecheck[1]{{\mathpalette\@widecheck{#1}}}
\def\@widecheck#1#2{%
    \setbox\z@\hbox{\m@th$#1#2$}%
    \setbox\tw@\hbox{\m@th$#1%
       \widehat{%
          \vrule\@width\z@\@height\ht\z@
          \vrule\@height\z@\@width\wd\z@}$}%
    \dp\tw@-\ht\z@
    \@tempdima\ht\z@ \advance\@tempdima2\ht\tw@ \divide\@tempdima\thr@@
    \setbox\tw@\hbox{% 
       \raise\@tempdima\hbox{\scalebox{1}[-1]{\lower\@tempdima\box
\tw@}}}%
    {\ooalign{\box\tw@ \cr \box\z@}}}
\newcommand{\ep}{\epsilon}
\newcommand{\ga}{\gamma}
\DeclareMathOperator{\sech}{sech}
\DeclareMathOperator{\spn}{span}
\newcommand{\ds}{\displaystyle}
\newcommand{\be}{\begin{equation}}
\newcommand{\ee}{\end{equation}}
\newcommand{\bes}{\begin{equation*}}
\newcommand{\ees}{\end{equation*}}
\newcommand{\mand}{\quad \text{and}\quad}
\newcommand{\R}{{\bf{R}}}
\newcommand{\T}{{\bf{T}}}
\newcommand{\Z}{{\bf{Z}}}
\newcommand{\p}{\partial}
\newcommand{\cl}{{\mathcal L}}
\newcommand{\ck}{{\mathcal K}}
\newcommand{\rone}{\mathbf R}
\newcommand{\beq}{\begin{equation}}
\newcommand{\eeq}{\end{equation}}
\newcommand{\de}{\delta}
\newcommand{\la}{\lambda}
\newcommand{\si}{\sigma}
\newcommand{\cj}{{\mathcal J}}
\newcommand{\dpr}[2]{\langle #1,#2 \rangle}
\renewcommand{\L}{{\mathcal{L}}}
\newcommand{\W}{{\mathcal{W}}}
\renewcommand{\tilde}{\widetilde}
\renewcommand{\hat}{\widehat}
\newcommand{\bunderbrace}[2]{%
  \begin{array}[t]{@{}c@{}}
  \underbrace{#1}\\
  #2
  \end{array}
}
\title{Small amplitude traveling waves in the full-dispersion Whitham equation}
\author[Atanas Stefanov]{Atanas Stefanov} 
\address{ Department of Mathematics,
University of Kansas,
1460 Jayhawk Boulevard,  Lawrence KS 66045--7523, USA}
\email{stefanov@ku.edu}
\thanks{ Stefanov  is partially  supported by  NSF-DMS under grant  \# 1614734.
Wright  is partially  supported by  NSF-DMS under grant  \# 1511488.}
\author[J. Douglas Wright]{J. Douglas Wright}
\address{ Department of Mathematics,
Drexel University, 3141 Chestnut St,
Philadelphia,  PA 19104, USA}
\email{jdoug@math.drexel.edu}
\begin{document}

\begin{abstract}
In this article, we provide an alternative way to construct small amplitude traveling waves for general Whitham type equations, in both periodic and whole line contexts. More specifically,   Fourier analysis techniques allow us to reformulate the problem to the study of waves that are small and regular perturbations of well-understood ODE's.  In addition, rigorous  stability of these waves is established. 
 \end{abstract}
\maketitle

\section{Introduction} 
The equation, 
\begin{equation}
\label{l:10} 
u_t+ \W u_x+2 u u_x=0, \ \ \widehat{\W u}(k)=\sqrt{\frac{\tanh(k)}{k}} \hat{u}(k)
\end{equation}
was proposed by Whitham \cite{W} as an alternative model to the ubiquitous Korteweg-de Vries approximation ($u_t + u_{xxx} + 2 uu_x = 0$) for water waves.
In particular, \eqref{l:10} is driven by the non-local operator $\W$, which  (modulo some rescalings) 
gives the ``full-dispersion" relation for the corresponding water waves equation.
   It also allows, in sharp contrast with the KdV model,  for  wave breaking (\cite{W1}), a desirable realistic  feature for such models.

In this article we study a  generalization of \eqref{l:10}. More specifically, we allow for 
 the following sort of  ``pseudo-differential equations of Whitham type":
\be\label{W}
u_t + (Lu + n(u))_x = 0, \quad u = u(x,t) \in \R,\quad x \in \R \mand \ t \in \R,
\ee
where $n:\R \to \R$ is purely nonlinear. The operator $L$ is a  Fourier multiplier operator with symbol $m$. That is 
$$
\widehat{L f}(k)=m(k) \hat{f}(k)
$$
where $\hat{f}(k)$ is the Fourier transform of $f(x)$.
%where $\hat{f}(k)=\ds \int_\R f(x) e^{- i x k} dx$ and its inverse is $f(x) =\ds \frac{1}{2\pi}   \int_\R \hat{f}(k) e^{ i x k} dk$. 
Precise conditions on $m$ and $n$ will be set forth below, but the prototypical choices will be of course   
$$
m(k) = \sqrt{\tanh(k)/k} \mand  n(u) = u^2,
$$
which then leads us to the original model \eqref{l:10}.  The dynamical properties of \eqref{l:10}, such as local well-posedness, 
wave breaking among others (for \eqref{l:10} as well as for  some more general versions, similar to \eqref{W}) have been thoroughly explored in recent years. We do not review these developments here, as the main focus of the current work lies in the existence and the properties of a class of special solutions, namely traveling waves.  

More specifically, 
we make the traveling wave ansatz  $u(x,t) = w(x-\nu t)$, where $\nu \in \R$ is   as yet undetermined wave speed. After one integration
%\footnote{We are free to  
%	set the constant of integration to zero, since any regular solution of \eqref{TWE1} will give a traveling wave for \eqref{W}. Being more
%	flexible with the constant may result in other classes of traveling waves.} 
	we arrive at:
\be\label{TWE1}
(\nu  - L)w = n(w).
\ee
The question for existence and the corresponding properties of traveling waves, that is solutions of \eqref{TWE1}, in either the whole line or periodic context, has been the subject of numerous papers  over the last ten years. We mention the papers   \cite{EK1, EK}, where the question for existence periodic waves is investigated, both rigorously and numerically.  Traveling waves in a model with weak surface tension were considered in \cite{GW10}. Finally, in the {\it tour de force}, \cite{EGW}, the authors have constructed (through an involved constrained variational with penalization  construction), traveling waves for the whole line problem, with speeds slightly bigger than the sonic speed $\nu=1$. The question for stability of these waves, mostly in the periodic context, was considered recently in \cite{SKCK}. It should be noted that both in the analytical and numerical results discussed herein and elsewhere, it appears that there is some natural barrier for the  the wave speeds, $1<\nu<1.141...$, which is still not  fully understood.  Thus, the ``slightly supersonic'' assumption in   these papers appears to be well-warranted. 
 The methods  in  these papers are varied and rather technical. In some cases, the analysis is supplemented by numerical simulations, which is justified given the lack of precise formulas, even in the classical case \eqref{l:10}.  
 
 In this article,  we take  a slightly different point of view. A rescaling of the problem,  together with some Fourier analysis reformulates the problem in such a way that the governing equations for the traveling waves are small and regular perturbations of well-understood
ordinary differential equations. Then we use an implicit function theorem to prove the existence of solutions when the scaling parameter is small. The main ideas of the method are inspired by the work of Friesecke \&  Pego \cite{FP} and Friesecke \& Mikikits-Leitner \cite{FM} on traveling waves in Fermi-Pasta-Ulam-Tsingou lattices, whose governing
equations are nonlocal in a way similar to those we study here.

 \subsection{Assumptions and main results} We make the following assumption regarding $n(u)$. 
\begin{assumption}\label{n ass} There exists $\delta_*>0$ such that 
the nonlinearity $n: (-\delta_*,\delta_*) \to \R$ is $C^{2,1}$ (that is, its second derivative exists and is uniformly Lipschitz continuous) and satisfies
$$
n(0) = n'(0) = 0 \mand n''(0) > 0.
$$
\end{assumption}

And here is our assumption on the multiplier $m$, which is a sort of combination of convexity near zero with boundededness 
for large $k$:
\begin{assumption}\label{m ass}
The multiplier $m:\R \to \R$ is  even and there exists $k_*>0$ 
which has the following properties:
\begin{itemize}
\item $m$ is $C^{3,1}$ (that is, its third derivative exists and is uniformly Lipschitz continuous) on $[-k_*,k_*]$, $m(0)>0$ and  
\be 
\label{m2 bound} 
m_2:=\max_{|k|\le k_*} m''(k) < 0.
\ee
In particular $m''(0)<0$. 
\item  
\be\label{upperbound}
m_1:=\sup_{k \ge k_*} m(k)  < m(0).
\ee
\end{itemize}
\end{assumption}
An important quantity that will arise in the analysis is 
\be\label{this is gamma}
\gamma:= - {n''(0)  \over m''(0)}>0, 
\ee
by Assumptions \ref{n ass} and \ref{m ass}.
Both Assumption \ref{n ass} and \ref{m ass} are easily verified for the choices which give the full-dispersion Whitham equation \eqref{l:10}.
Here are our main results. Note that  our construction provides explicit leading  term both for the waves speeds and the traveling wave profile\footnote{In principle, one could compute explicitly  the next terms, up to any  degree of accuracy} . 
\begin{theorem} 
\label{theo:10} 
The following hold when Assumptions \ref{n ass} and \ref{m ass} are met. There exists $\ep_0>0$, so that for every $\ep\in (0, \ep_0)$, there is a traveling wave solution 
$u(x,t)= \ep^2 W_\ep(\ep(x-\nu_\ep t))$ of \eqref{W}. Moreover,  $W_\ep\in H^1_{even}(\rone)$, 
\begin{eqnarray}
\label{a:105}
\nu_\ep &=&  m(0) - {1 \over 2} m''(0) \ep^2,  \\
\label{107}
W_\ep(x) &=& \frac{3}{2\ga}  \sech^2\left(\frac{x}{2}\right)+O_{H^1(\R)}(\ep^2).
\end{eqnarray}

 In addition, assume the boundedness of $m$. Then, the waves $ \ep^2 W_\ep(\ep(x-\nu_\ep t))$ are in fact spectrally stable, for all small enough values of $\ep$. 
\end{theorem}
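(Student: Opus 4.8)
The plan is to linearize \eqref{W} about the constructed wave in the co-moving frame and to show that the spectrum of the resulting linear operator sits on the imaginary axis, which is exactly spectral stability. Writing $u=w_\ep+v$ with $w_\ep(\xi)=\ep^2W_\ep(\ep\xi)$ and $\xi=x-\nu_\ep t$, equation \eqref{W} in the traveling frame reads $u_t=\p_\xi\big((\nu_\ep-L)u-n(u)\big)$, so the linearized evolution is $v_t=\A_\ep v$ with
\[
\A_\ep=\p_\xi\,\L_\ep,\qquad \L_\ep:=(\nu_\ep-L)-n'(w_\ep).
\]
Since $m$ is even and real, $L$ is self-adjoint, and $n'(w_\ep)$ is a real multiplication operator, so $\L_\ep$ is self-adjoint while $\p_\xi$ is skew-adjoint; thus $\A_\ep=\p_\xi\L_\ep$ has Hamiltonian form and its spectrum is symmetric under $\la\mapsto-\la$ and $\la\mapsto\bar\la$. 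Spectral stability is therefore the assertion $\mathrm{spec}(\A_\ep)\subset i\R$. This is where the boundedness of $m$ is first used: it guarantees $L$ is bounded on $L^2(\R)$, so that $\L_\ep$ is a bounded self-adjoint operator and $\A_\ep$ is a well-defined closed operator on $H^1(\R)$ whose leading behavior is governed by $\p_\xi$.

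Next I would locate the essential spectrum. Because $w_\ep(\xi)\to 0$ as $|\xi|\to\infty$ and $n'(0)=0$, the multiplication operator $n'(w_\ep)$ decays at infinity and is a relatively compact perturbation of the constant-coefficient operator $\A_\ep^\infty:=\p_\xi(\nu_\ep-L)$. By Weyl's theorem the essential spectrum of $\A_\ep$ coincides with that of $\A_\ep^\infty$, namely the range of the Fourier symbol $ik\big(\nu_\ep-m(k)\big)$, $k\in\R$. As $\nu_\ep$ and $m(k)$ are real this range lies in $i\R$, so the essential spectrum is already on the imaginary axis and only finitely many discrete eigenvalues could carry an instability.

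For the point spectrum I would pass to the long-wave scaling $X=\ep\xi$. Using that $m$ is even with $m''(0)<0$, a Taylor expansion yields $\nu_\ep-L=\tfrac12 m''(0)\ep^2(\p_X^2-1)+O(\ep^4)$ as an operator in $X$, while \eqref{107}, \eqref{this is gamma} and $n'(0)=0$ give $n'(w_\ep)=-\tfrac32 m''(0)\ep^2\,\sech^2(X/2)+O(\ep^4)$. Since $\p_\xi=\ep\,\p_X$, the rescaled operator satisfies
\[
\frac{2}{m''(0)\,\ep^3}\,\A_\ep\;\longrightarrow\;\B:=\p_X\big(\p_X^2-1+3\,\sech^2(X/2)\big)
\quad\text{as}\quad \ep\to 0,
\]
and $\B$ is exactly the linearization of the Korteweg--de Vries equation $u_t+u_{xxx}+2uu_x=0$ about its unit-speed soliton $\tfrac32\sech^2(X/2)$. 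The KdV soliton is classically spectrally stable: $\mathrm{spec}(\B)\subset i\R$, its only discrete spectrum is the origin (whose generalized kernel is finite-dimensional and generated by the symmetries of the equation), and there are no eigenvalues off $i\R$. Accordingly, if $\A_\ep$ had an eigenvalue $\la_\ep$ with $\mathrm{Re}\,\la_\ep\neq0$ along some sequence $\ep\to0$, the rescaled eigenvalues $\tfrac{2}{m''(0)\ep^3}\la_\ep$ would have to accumulate at a discrete eigenvalue of $\B$ with nonzero real part, contradicting the stability of the KdV soliton. The eigenvalue at the origin persists for every $\ep$ (with eigenfunction $\p_\xi w_\ep$) by translation invariance and, by the Hamiltonian symmetry, cannot split off a single real pair.

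The main obstacle is to make this limit rigorous, since any unstable eigenvalue lives at the scale $\ep^3$, so ordinary perturbation theory applied to $\A_\ep$ itself is useless; the argument must instead be run for the rescaled family and rests on uniform-in-$\ep$ resolvent estimates in the scaled space. These estimates must rule out two scenarios peculiar to nonlocal problems: an eigenvalue escaping to $\infty$ under the rescaling, and an eigenvalue bifurcating out of the edge of the essential spectrum. Controlling the distance to the essential spectrum uniformly, and showing that the convergence to $\B$ is strong enough (e.g.\ in the strong resolvent sense, with compactness of the $\sech^2$ potential term) to transfer the spectral gap of the KdV linearization to $\A_\ep$, is the technical heart of the proof and is directly analogous to the solitary-wave stability analysis of Friesecke--Pego \cite{FP} and Friesecke--Mikikits-Leitner \cite{FM}. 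Once these uniform bounds are in place, the Hamiltonian quadruple symmetry closes the argument: for small $\ep$ no eigenvalue can leave $i\R$ without a collision that the KdV limit excludes.
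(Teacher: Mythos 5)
Your outline follows the Pego--Weinstein/Friesecke--Pego program of perturbing the full linearization $\p_\xi \cl_\ep$ off its KdV limit. This is a legitimate but genuinely different --- and substantially harder --- route than the paper's, and as written it has real gaps. First, you only address stability; the existence half of the theorem (the rescaling, the inversion of $\nu_\ep - L_\ep$ via Lemma \ref{hinge}, and the implicit function theorem at the KdV profile $\sigma$) is not touched. Second, the intermediate identity $\nu_\ep - L = \tfrac12 m''(0)\ep^2(\p_X^2-1)+O(\ep^4)$ is not valid as an operator estimate: the Taylor expansion of the symbol holds only for $|\ep K|\le k_*$ and fails uniformly in frequency. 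The correct uniform statement is about the \emph{inverse}, $\ep^2(\nu_\ep-L_\ep)^{-1} = -\tfrac{2}{m''(0)}(1-\p_x^2)^{-1}+O_{B(X)}(\ep^2)$, which is exactly what Lemma \ref{hinge} delivers and why the paper always works with resolvents. Third, and most seriously, the ``spectral gap of the KdV linearization'' you propose to transfer does not exist: the essential spectrum of $\B=\p_X\bigl(\p_X^2-1+3\sech^2(X/2)\bigr)$ is the \emph{entire} imaginary axis, with $0$ an embedded eigenvalue. Consequently the scenario you yourself flag --- an eigenvalue emerging from the essential spectrum at a nonzero point of $i\R$ and leaving the axis --- cannot be excluded by the Hamiltonian quadruple symmetry, and the uniform-in-$\ep$ resolvent estimates you identify as ``the technical heart'' are precisely the content of a proof, not a detail to be deferred. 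As it stands the proposal is a program, not an argument.

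The paper avoids all of this by never analyzing the non-self-adjoint operator $\p_x\cl_\ep$ directly. It invokes the Hamiltonian--Krein index bound $k_{unstable}\le n^-(\cl_\ep)-k_0^{\le 0}(\cl_\ep)$ of Lin--Zeng, which reduces spectral stability to two facts about the \emph{self-adjoint} operator $\cl_\ep$: (i) $n^-(\cl_\ep)=1$ and $Ker[\cl_\ep]=span\{W_\ep'\}$, obtained in Lemma \ref{le:a10} from the norm-resolvent convergence $(\ep^{-2}\cl_\ep+\mu)^{-1}=\bigl(-\tfrac{m''(0)}{2}\cl+\mu\bigr)^{-1}+O_{B(L^2)}(\ep^2)$; and (ii) the Vakhitov--Kolokolov sign $\dpr{\cl_\ep^{-1}W_\ep}{W_\ep}<0$, which in the limit reduces to the explicit computation $\dpr{\cl^{-1}\sigma}{\sigma}=-\tfrac34\|\sigma\|^2<0$. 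Both inputs follow from standard perturbation theory for self-adjoint operators plus the resolvent expansion of Lemma \ref{hinge}; no control of eigenvalues of $\p_x\cl_\ep$ near the essential spectrum is ever needed. If you wish to pursue your route you would essentially have to redo the Friesecke--Pego analysis for this nonlocal symbol; the index-count route reaches the same conclusion with far less machinery.
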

{\bf Remarks:} 
\begin{enumerate}
\item Assuming higher regularity of $n$, say $C^{l+2,1}(\rone)$, we  have that $W_\ep\in H^l(\rone)$. 

\item In the proof, we can actually verify the non-degeneracy of the solution $\ep^2 W_\ep(\ep x)$ in the sense that the linearized operator has kernel spanned exactly by the group of symmetries\footnote{in this case, the only symmetry is the translation in the $x$ variable}. By general results for Hamiltonian systems, see for example Theorem 5.2.11 in \cite{KP}, the spectral stability  should imply orbital stability as well. Unfortunately, the conditions in Theorem 5.2.11 in \cite{KP} are not exactly met, since the 
anti self-adjoint portion of the linearization,  $\cj=\p_x$ is not boundedly invertible. This   is likely only a technical issue and we expect  orbital stability to hold as well. 
\end{enumerate} 

We also prove the existence of periodic ``cnoidal" solutions of \eqref{W}.
\begin{theorem} 
\label{theo:10P} 
The following hold when Assumptions \ref{n ass} and \ref{m ass} are met. There exists
$P_0 > 0$ such that the following holds for all $P >P_0$.
There exists $\ep_P>0$, so that for every $\ep\in (0, \ep_P)$ there is a $2P/\ep$-periodic, even, non-zero traveling wave solution 
$u(x,t)= \ep^2 W_{P,\ep}(\ep(x-\nu_\ep t))$ of \eqref{W}. Moreover,  $W_{P,\ep}\in H^1_{even}(\T_{P})$, 
\begin{eqnarray}
\label{a:1055}
\nu_\ep &=&  m(0) - {1 \over 2} m''(0) \ep^2,  \\
\label{1077}
W_{P,\ep}(x) &=&  \phi_P(x)+O_{H^1(\T_P)}(\ep^2),
\end{eqnarray} 
where $\phi_P$ is the unique even, non-zero $2P$-periodic solution of
$-\phi_P''+\phi_P - \gamma \phi_P^2 = 0$. 

In addition, assume the boundedness of $m$.
For $0<\epsilon\ll1$, the waves $W_{P,\epsilon}$ are 
 spectrally and orbitally stable, with respect to co-periodic 
 perturbations (that is   perturbations of the same period $2P/\ep$). 
\end{theorem}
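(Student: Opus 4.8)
The plan is to construct periodic waves by the same rescaling-and-implicit-function-theorem strategy used for Theorem \ref{theo:10}, so I will focus on the structural features that are genuinely new in the periodic setting. First I would substitute the ansatz $u(x,t)=\ep^2 W_{P,\ep}(\ep(x-\nu_\ep t))$ into \eqref{TWE1} and rescale space by $\ep$, which turns the nonlocal operator $L$ into a multiplier operator whose symbol is sampled at $\ep k$; Taylor-expanding $m(\ep k)=m(0)+\tfrac12 m''(0)(\ep k)^2+O(\ep^4 k^4)$ and using the speed choice $\nu_\ep=m(0)-\tfrac12 m''(0)\ep^2$ produces, at leading order, the local ODE $-\phi''+\phi-\gamma\phi^2=0$ after dividing through by the common factor $-\tfrac12 m''(0)\ep^2>0$. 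On the torus $\T_P$ this ODE has, for $P>P_0$ large enough, a unique even nonzero periodic solution $\phi_P$ (a cnoidal function limiting to the $\mathrm{sech}^2$ soliton as $P\to\infty$), giving \eqref{1077}. The $\ep$-dependent remainder operator, namely $m(0)-m(\ep k)+\tfrac12 m''(0)(\ep k)^2$ divided by $\tfrac12 m''(0)\ep^2$, must be shown to be a bounded, indeed small, perturbation from $H^1(\T_P)$ into $L^2(\T_P)$ uniformly in $\ep$; this is where Assumption \ref{m ass} (the $C^{3,1}$ regularity near zero controlling the Taylor error, together with the uniform bound \eqref{upperbound} controlling the high-frequency tail) does its work.

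Next I would set up the implicit function theorem. Writing the rescaled equation as $F(W,\ep)=0$ with $F(\cdot,0)(W)=-W''+W-\gamma W^2$, I need $F$ to be $C^1$ in a neighborhood in $H^1_{even}(\T_P)\times[0,\ep_P)$ and the linearization $D_W F(\phi_P,0)=-\p_x^2+1-2\gamma\phi_P$ to be an isomorphism from $H^1_{even}(\T_P)$ onto its target. The crucial nondegeneracy input is that this Hill-type operator, restricted to the even subspace, has trivial kernel; generically its kernel on the full space is spanned by $\phi_P'$, which is odd and hence removed by the even restriction. This is exactly the periodic analogue of the nondegeneracy of the $\mathrm{sech}^2$ profile, and it must be verified for each $P>P_0$ (or inherited from the whole-line result in the large-$P$ limit). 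Granting invertibility, the implicit function theorem yields $W_{P,\ep}=\phi_P+O_{H^1(\T_P)}(\ep^2)$ for $\ep$ small, completing the existence and expansion claims.

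For spectral and orbital stability with respect to co-periodic perturbations, I would linearize \eqref{W} about the constructed wave in the traveling frame and study the operator $\cj\mathcal{L}_{P,\ep}$, where $\cj=\p_x$ and $\mathcal{L}_{P,\ep}=\nu_\ep-L-n'(\ep^2 W_{P,\ep})$ is the self-adjoint Hessian. After the same rescaling, $\mathcal{L}_{P,\ep}$ is a small perturbation of the local operator $-\tfrac12 m''(0)\ep^2(-\p_x^2+1-2\gamma\phi_P)$, whose spectrum on $\T_P$ is explicit: one simple negative eigenvalue, a zero eigenvalue from translation (with eigenfunction $\phi_P'$), and the rest positive. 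Spectral stability then follows from a perturbative count of the relevant indices for the Hamiltonian pair $(\cj,\mathcal{L})$, and orbital stability from the Grillakis--Shatah--Strauss framework, checking the single convexity/slope condition $\tfrac{d}{d\nu}\!\int W_{P,\ep}^2$ (equivalently the sign of $\langle \mathcal{L}^{-1}\mathbf{1},\mathbf{1}\rangle$ type quantity) at leading order in $\ep$. Here, unlike the whole-line case, $\cj=\p_x$ is boundedly invertible on the mean-zero co-periodic space, so the GSS hypotheses are genuinely satisfied and orbital stability follows rather than merely being expected.

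The main obstacle I anticipate is not the implicit function theorem itself but the uniformity in $P$: the constant $P_0$, the invertibility bounds for $D_W F(\phi_P,0)$, and the spectral gaps all depend on $P$, and as $P\to\infty$ the profile $\phi_P$ spreads out and the spectral gap between the zero eigenvalue and the rest of the spectrum can shrink. Controlling $\ep_P$ and the perturbation estimates so that they remain valid on the growing torus $\T_P$, while simultaneously ensuring the nondegeneracy of $-\p_x^2+1-2\gamma\phi_P$ on the even subspace persists, is the delicate point; I would handle it by exploiting the known large-$P$ asymptotics of $\phi_P$ toward the soliton and the spectral nondegeneracy of the limiting operator $-\p_x^2+1-3\,\mathrm{sech}^2(\cdot/2)$ on the line.
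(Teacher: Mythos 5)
Your proposal follows essentially the same route as the paper: the same rescaling reduces the profile equation to a small, regular perturbation of $-\phi''+\phi-\gamma\phi^2=0$, the implicit function theorem is applied at $\phi_P$ using the invertibility of $Id-2\gamma(1-\p_x^2)^{-1}(\phi_P\,\cdot)$ on the even subspace (the kernel of the Hill operator being spanned by the odd function $\phi_P'$), and stability is obtained from the Hamiltonian index count together with the sign of $\langle \cl_P^{-1}\phi_P,\phi_P\rangle$, with $\p_x$ boundedly invertible on the mean-zero co-periodic subspace upgrading spectral to orbital stability. The one obstacle you flag, uniformity in $P$, is moot because the theorem fixes $P>P_0$ and allows $\ep_P$ to depend on $P$; note also that the paper settles the sign of the Vakhitov--Kolokolov quantity by appealing to the known co-periodic stability of KdV cnoidal waves rather than by computing a slope condition directly.
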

\subsection{Conventions}

By $H^s(\R)$ we mean the usual $L^2$-based order $s$ Sobolev space defined on $\R$. 
By $H^s(\T_P)$ we mean the 
usual $L^2$-based order $s$ Sobolev space of periodic functions with period $2P$.
Restricting attention only to even functions in the above results in the spaces $H^s_{even}(\R)$
and $H^s_{even}(\T_P)$. 
If $X$ is a Banach space then $B(X)$ is the space of bounded linear maps from $X$ to itself, endowed with the usual
norm.

For a function $f \in H^s(\R)$ we use the following normalizations for the Fourier transform and its inverse:
$$
\hat{f}(k)=\ds  \frac{1}{2\pi}\int_\R f(x) e^{- i x k} dx \mand f(x) =\ds   \int_\R \hat{f}(k) e^{ i x k} dk.
$$
For a function $f \in H^s(\T_P)$ we use the following normalizations for the Fourier series and its inverse:
$$
\hat{f}(k):={1 \over 2P}\int_{-P}^{P} f(x) e^{-ik  \pi x/P} dx  \mand f(x) = \sum_{k \in \Z} \hat{f}(k) e^{ik \pi x/P}.
$$

If $X$ is a Banach space and $q_\ep$ is an $\epsilon$ dependent quantity in $X$, we write
$$
q_\ep = O_X(\ep^p) 
$$
if there exists $\ep_0$ and $C>0$ such that
$$
\|q_\ep\|_{X} \le C \ep^p
$$
for $0 < \ep \le \ep_0$.

\section{Existence of small solutions} 
We present a detailed proof for  the whole line case.  The result for the periodic waves, which proceeds in an almost identical fashion,  is proved in Section \ref{sec:2.3}. 

Our approach consists of introducing and analyzing a rescaled system, which   is then showed to approximate the standard 
equation which gives the traveling wave solutions for KdV.
\subsection{The rescaled system}

%Now we apply the Fourier transform\footnote{Here are our conventions for the Fourier transform of $f(x)$: $\ds\mathfrak{F}[f](k) := \hat{f}(k) := {1 \over 2 \pi} \int_\R f(x) e^{-ikx} dx$ and $\mathfrak{F}^{-1}[f](y) = \check{f}(y) = \int_\R f(x) e^{ixy}dx.$ } to this to get
%$$
%(\nu - m(k)) \hat{u}(k) = \hat{n(u)}(k).
%$$
We make the ``long wave/small amplitude/nearly supersonic" scalings
$$
w(y) = \ep^{2} W(\ep y) \mand \nu = m(0) - {1 \over 2} m''(0) \ep^2
$$
where $0 < \ep \ll1$.
With this, \eqref{TWE1} becomes
\be\label{TWE2}
\left( m(0) - {1 \over 2} m''(0) \ep^2 - L_\ep \right) W = \ep^{-2} n( \ep^2 W)
\ee
where $L_\ep$ is the Fourier multiplier operator with symbol \be
\label{mep}
 m_\ep(k) = m(\ep k).
 \ee 
Since $n(u)$ is $C^{2,1}$ by assumption, Taylor's theorem tells us that
$$
\ep^{-2} n( \ep^2 W) = {\ep^2 \over 2} n''(0) W^2 + \ep^4 \rho_\ep(W)
$$
with 
\be\label{rho est}
|\rho_\ep(W)| \le C |W|^3 \mand |\p_x[\rho(W(x))]|\leq C |W'(x)| |W^2(x)|
\ee
when $|W| \le \delta_*/\ep^2$. Thus \eqref{TWE2} becomes:
\be\label{TWE3}
\left( m(0) - {1 \over 2} m''(0) \ep^2 - L_\ep \right) W = {\ep^2 \over 2} n''(0) W^2  + \ep^4 \rho_\ep(W).
\ee

Assumption  \ref{m ass} implies the following result. 
\begin{lemma} \label{hinge}
Given Assumption \ref{m ass}, there exists $C>0$ such that
\be\label{mult est}
\sup_{K \in \R} \left \vert {\ep^2 \over m(0)-{1 \over 2} m''(0) \ep^2 - m(\ep K)} + {1 \over {1 \over 2}m''(0)(1+ K^2)}\right \vert  \le C\ep^2
\ee
when $\ep$ is sufficiently close to zero.
\end{lemma}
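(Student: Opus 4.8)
The plan is to reduce the supremum to an explicit comparison of the exact denominator $D_\ep(K) := m(0) - \frac{1}{2}m''(0)\ep^2 - m(\ep K)$ with its quadratic caricature, and then control the resulting error uniformly by splitting the $K$-axis at the scale $|K| = k_*/\ep$ dictated by Assumption \ref{m ass}. First I would place the two fractions over a common denominator. Since $m$ is even we have $m'(0) = m'''(0) = 0$, so Taylor's theorem gives $m(\ep K) = m(0) + \frac{1}{2}m''(0)\ep^2 K^2 + r(\ep K)$, and a direct computation shows the numerator of the combined fraction collapses to exactly $-r(\ep K)$. Concretely, writing $g(K) := \frac{1}{2}m''(0)(1+K^2) < 0$, one finds $D_\ep(K) = -\ep^2 g(K) - r(\ep K)$ and
$$
\LV \frac{\ep^2}{D_\ep(K)} + \frac{1}{g(K)}\RV = \frac{|r(\ep K)|}{|D_\ep(K)|\,|g(K)|}.
$$
Thus everything hinges on a good lower bound for $|D_\ep(K)|$ together with the remainder estimate.

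Then I would treat the region $|\ep K| \le k_*$. Here $m \in C^{3,1}$ and $m'''(0)=0$ let me bound the remainder by $|r(\ep K)| \le C\ep^4 K^4$, while the convexity bound $m''(\eta) \le m_2 < 0$ from \eqref{m2 bound} yields $m(0) - m(\ep K) \ge \frac{1}{2}|m_2|\ep^2 K^2$. Combining this with $-\frac{1}{2}m''(0)\ep^2 > 0$ and the elementary inequality $|m_2| \le |m''(0)|$ produces the clean lower bound $|D_\ep(K)| \ge \frac{1}{2}|m_2|\ep^2(1+K^2)$. Feeding this into the displayed quotient, the power of $\ep^2$ in the remainder and in the product $|D_\ep||g|$ cancels down to a single factor $\ep^2$, and using $K^4 \le (1+K^2)^2$ for all $K$ collapses the bound to a fixed constant times $\ep^2$, uniformly on this region.

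Finally, for $|\ep K| > k_*$ I would abandon the Taylor expansion and invoke the boundedness hypothesis \eqref{upperbound}: since $m(\ep K) \le m_1 < m(0)$, the denominator is bounded below by the fixed positive constant $m(0) - m_1$, so that $\ep^2/D_\ep(K) = O(\ep^2)$ uniformly. Separately, because $1 + K^2 > k_*^2/\ep^2$ on this region, the second term $1/g(K)$ is itself $O(\ep^2)$, and the triangle inequality finishes this case. Taking the maximum of the two regional constants then gives the uniform estimate $\sup_K(\cdots) \le C\ep^2$ claimed in \eqref{mult est}.

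The main obstacle is achieving uniformity in $K$ across the whole line: for $|K|$ near $k_*/\ep$ the remainder $r(\ep K)$ grows like $\ep^4 K^4$, which a priori could overwhelm the $\ep^2(1+K^2)$ size of the denominator. The saving is that $\ep^2 K^2 \le k_*^2$ throughout the first region, so the ratio $K^4/(1+K^2)^2$ stays bounded by $1$ and the error remains at order $\ep^2$. This is precisely why Assumption \ref{m ass} couples strict concavity near the origin, which controls $|D_\ep|$ from below while the quadratic approximation is accurate, with strict boundedness away from it, which covers the complementary regime where that approximation is useless.
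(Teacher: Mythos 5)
Your proposal is correct and follows essentially the same route as the paper: the same common-denominator reduction (the numerator collapsing to the Taylor remainder of the even function $m$), the same splitting of the $K$-axis at $|K| = k_*/\ep$, the same lower bound $|D_\ep(K)| \gtrsim \ep^2(1+K^2)$ from the concavity bound \eqref{m2 bound} on the inner region, and the same use of \eqref{upperbound} plus the triangle inequality on the outer region. The only cosmetic difference is that you make the constant $\tfrac{1}{2}|m_2|$ in the denominator bound explicit via $|m_2| \le |m''(0)|$, which the paper leaves as an unnamed $C$.
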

We postpone the technical proof for the Appendix \ref{assorted proofs}, below.
Note however that quite a bit of information is packed into this Lemma. The first piece is that it guarantees that \\ 
$
\left( m(0) - {1 \over 2} m''(0) \ep^2 - L_\ep \right)
$
has a bounded inverse. And so we can rewrite \eqref{TWE3} as:
\be\label{TWE33}
\bunderbrace{W - \ep^2 \left( m(0) - {1 \over 2} m''(0) \ep^2 - L_\ep \right)^{-1} \left( {1\over 2} n''(0) W^2 + \ep^2 \rho_\ep(W)\right)}{\Phi(W,\ep)} = 0.
\ee
Our goal is to resolve \eqref{TWE33}, at least for $0<\ep\ll1$. To do so, we will rely on the implicit function theorem and as such we need the behavior of the limiting system at $\ep=0$. 

Lemma \ref{hinge} implies that
\be\label{limit}
\ep^2 \left( m(0) - {1 \over 2} m''(0) \ep^2 - L_\ep \right)^{-1} = -\frac{2}{m''(0)}(1- \partial_x^2)^{-1} + O_{B(X)} (\ep^2)
\ee
where $X$ is either $H^s(\R)$ or $H^s(\T_P)$.
%\be\label{limit}
%\lim_{\ep \to 0} \left \| \ep^2 \left( m(0) - {1 \over 2} m''(0) \ep^2 - L_\ep \right)^{-1} +  \frac{2}{m''(0)}(1- \partial_x^2)^{-1}\right\|_{H^s \to H^s} \le C \ep^2.
%\ee
Thus, if we set $\ep = 0$ in \eqref{TWE33}, we   get:
\be\label{TWE4}
W -{\gamma (1- \partial_X^2)^{-1}} W^2 = 0
\ee
or rather 
\be\label{TWE5}
-W''+  W -\gamma W^2=0. 
\ee
Here $\gamma>0$ is given above in \eqref{this is gamma}.

\subsection{Existence of localized traveling waves}
Observe that \eqref{TWE5}, and so \eqref{TWE4},  has (a unique!) non-zero even localized solution, namely 
\beq
\label{a:110}
W(X) =\sigma (x) :=  \frac{3}{2\ga}  \sech^2\left(\frac{x}{2}\right). 
\end{equation}
In other words,  we have $
\Phi(\sigma,0) = 0.
$

Linearization of \eqref{TWE5} about $\sigma(x)$ results in the 
 self-adjoint operator $$\cl:=-\p_x^2+1 - 2\ga\sigma,$$ which is well-studied in the literature. It is known to have exactly one negative eigenvalue, a single eigenvalue at zero, spanned by $\sigma'$,  and outside of these two directions, the operator $\cl$  is strictly positive. 

\subsubsection{Solvability of \eqref{TWE33}}

If we compute $\ck:=D_W\Phi(\sigma,0)$ we get
$$
\ck= Id  - 2 \gamma (1-\partial_x^2)^{-1} \left(\sigma \cdot \right).
$$
The following lemma is proved in \cite{FP}:
\begin{lemma}
\label{le:19}
$\ck:  L^2_{even}(\rone)\to L^2_{even}(\rone)$ is bounded and has a bounded inverse.  Also, 
$\ck:  H^1_{even}(\rone)\to H^1_{even}(\rone)$ is bounded and invertible. 
\end{lemma}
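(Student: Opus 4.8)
The plan is to treat $\ck$ as a compact perturbation of the identity and to read off its kernel from the well-understood operator $\cl$. Write $\ck = Id - K$, where $K := 2\gamma(1-\p_x^2)^{-1}(\sigma\,\cdot\,)$. Boundedness of $K$ on $L^2_{even}(\rone)$ is immediate, since $\sigma$ is bounded and $(1-\p_x^2)^{-1}$ is a bounded (indeed smoothing) Fourier multiplier with symbol $(1+k^2)^{-1}$. For \emph{compactness} on $L^2$ I would use the Hilbert--Schmidt criterion: the operator $K$ has integral kernel $2\gamma\, G(x-y)\sigma(y)$, where $G(z)=\tfrac12 e^{-|z|}$ is the Green's function of $1-\p_x^2$, and
$$
\|K\|_{HS}^2 = 4\gamma^2 \iint |G(x-y)|^2 |\sigma(y)|^2\,dx\,dy = 4\gamma^2\,\|G\|_{L^2}^2\,\|\sigma\|_{L^2}^2 < \infty,
$$
because both $G$ and the $\sech^2$-profile $\sigma$ lie in $L^2(\rone)$. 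Hence $K$ is compact and $\ck=Id-K$ is Fredholm of index zero on $L^2_{even}$.

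By the Fredholm alternative, invertibility of $\ck$ on $L^2_{even}$ now reduces to \emph{injectivity}. Suppose $\ck f = 0$ with $f\in L^2_{even}$; then $f = 2\gamma(1-\p_x^2)^{-1}(\sigma f)\in H^2$, so $f$ is automatically regular, and applying $1-\p_x^2$ gives the pointwise identity
$$
\ck f = 0 \iff (1-\p_x^2)f = 2\gamma\sigma f \iff \cl f = 0.
$$
Thus $\ker\ck = \ker\cl$ within the even sector. By the spectral description recalled above, $\ker\cl = \spn\{\sigma'\}$; but $\sigma$ is even, so $\sigma'$ is odd and has no nonzero even component. Therefore $\ck$ is injective on $L^2_{even}$, and the Fredholm alternative upgrades injectivity to bounded invertibility.

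For the $H^1_{even}$ statement I would run the identical scheme one notch higher. Multiplication by $\sigma$ is compact as a map $H^1(\rone)\to L^2(\rone)$ --- this follows from local Rellich compactness together with the decay $\sigma(x)\to 0$, which controls the tails --- and composing with the bounded smoothing map $(1-\p_x^2)^{-1}:L^2\to H^2\hookrightarrow H^1$ shows that $K$ is compact on $H^1_{even}$. The kernel identity is unchanged, and any $f\in H^1_{even}$ with $\ck f = 0$ is in particular an element of $\ker\ck$ in $L^2_{even}$, hence zero. The Fredholm alternative then gives bounded invertibility on $H^1_{even}$.

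The only genuinely analytic step is the compactness of $K$; everything else is either the algebraic kernel identity $\ker\ck=\ker\cl$ or quoted input (the one-negative-eigenvalue/simple-kernel structure of $\cl$ and the Fredholm alternative). I expect the Hilbert--Schmidt bound to be the cleanest proof of compactness on $L^2$, with the decay-assisted Rellich argument handling $H^1$; the main point to be careful about is the regularity bootstrap that justifies passing from $\ck f = 0$ to $\cl f = 0$.
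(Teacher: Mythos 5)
Your proposal is correct and follows essentially the same route as the paper: write $\ck = Id - K$ with $K = 2\gamma(1-\p_x^2)^{-1}(\sigma\,\cdot\,)$ compact, invoke the Fredholm alternative, bootstrap any kernel element to $H^2$ so that $\ck f=0$ becomes $\cl f=0$, and rule out even kernel elements since $\ker\cl=\spn\{\sigma'\}$ is odd. The only difference is that the paper cites the compactness of $K$ as classical (from Friesecke--Pego), whereas you supply the Hilbert--Schmidt computation and the decay-assisted Rellich argument explicitly; both are valid.
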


Here is a brief explanation of the proof.
It is by now a classical result that $(1-\partial_x^2)^{-1} \left(\sigma \cdot \right) :L^2(\rone)\to L^2(\rone)$ 
and indeed $(1-\partial_x^2)^{-1} \left(\sigma \cdot \right) : H^1(\rone)\to H^1(\rone)$   is a compact operator. Thus, the set $\sigma(\ck)\setminus\{1\}$ 
has only eigenvalues of finite multiplicity. Note that when restricted to the even (and also odd subspaces), $\ck$ acts invariantly, that is $\ck: H^1_{even}(\rone)\to H^1_{even}(\rone)$.  We claim that $\ck$ is invertible on $H^1_{even}(\rone)$. Indeed, assuming otherwise, it must be, by the Fredholm alternative, that there is an eigenfunction $f_0\in H^1_{even}: \ck f_0=0$. One quickly realizes that this implies $f_0\in H^2(\rone)$ and $\cl f_0=0$. This is a contradiction, since $f_0\in Ker[\cl]=span[\sigma']$, which then implies that $f_0$ is an odd function. 
%\begin{lemma}
%\label{le:19}
%$\ck:  L^2_{even}(\rone)\to L^2_{even}(\rone)$ is bounded and has a bounded inverse.  Also, 
%$\ck:  H^1_{even}(\rone)\to H^1_{even}(\rone)$ is bounded and invertible. 
%\end{lemma}

We use the following version of the implicit function theorem:
\begin{theorem}
\label{theo:impl} 
Let $X$ be a Banach space and suppose that $\Phi : X \times \R \to X$ has the following properties: (a) $\Phi$ is continuously differentiable (b)
$\Phi(x_*,\mu_*) = 0$ and (c) $D_x \Phi(x_*,\mu_*)$  has bounded inverse from $X$ to $X$ then there exists a neighborhoods $U$ of $x_*$ and $M$ of 
$\mu_*$ and 
differentiable function $\chi: M \to U$ such that $\Phi(\chi(\mu),\mu) = 0$ and $\Phi(x,\mu) = 0$ iff $x = \chi(\mu)$ for all $(x,\mu) \in U \times M$.
\end{theorem}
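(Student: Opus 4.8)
The statement is the standard implicit function theorem on Banach spaces, and the plan is to reduce the equation $\Phi(x,\mu)=0$ to a fixed-point problem that is then solved by the contraction mapping principle. Set $A := D_x\Phi(x_*,\mu_*)$, which by hypothesis (c) is a boundedly invertible element of $B(X)$, and for each fixed $\mu$ define the map $T_\mu : X \to X$ by
\be
T_\mu(x) := x - A^{-1}\Phi(x,\mu).
\ee
Since $A^{-1}$ is injective, $x$ solves $\Phi(x,\mu)=0$ if and only if $T_\mu(x)=x$, so it suffices to locate fixed points of $T_\mu$.

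First I would show that $T_\mu$ is a contraction on a small closed ball $\B_r := \{x \in X : \|x-x_*\|_X \le r\}$. Differentiating gives $D_x T_\mu(x) = Id - A^{-1} D_x\Phi(x,\mu)$, which vanishes at the base point because $D_x T_{\mu_*}(x_*) = Id - A^{-1}A = 0$. Since $\Phi$ is $C^1$, the assignment $(x,\mu)\mapsto D_x\Phi(x,\mu)$ is continuous into $B(X)$, so there exist $r>0$ and a neighborhood $M$ of $\mu_*$ with $\|D_x T_\mu(x)\|_{B(X)} \le 1/2$ for all $x \in \B_r$ and $\mu \in M$. The mean value inequality then makes $T_\mu$ Lipschitz with constant $1/2$ on $\B_r$.

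Next I would verify the self-map property. As $\Phi(x_*,\mu_*)=0$ and $\Phi$ is continuous, shrinking $M$ if necessary gives $\|T_\mu(x_*)-x_*\|_X = \|A^{-1}\Phi(x_*,\mu)\|_X \le r/2$ for $\mu \in M$; combined with the contraction bound this yields $\|T_\mu(x)-x_*\|_X \le r$ for every $x \in \B_r$, so $T_\mu(\B_r)\subseteq \B_r$. The Banach fixed point theorem now furnishes, for each $\mu\in M$, a unique fixed point $\chi(\mu)\in\B_r$, which delivers both the existence of $\chi$ and the ``iff'' uniqueness clause with $U$ the interior of $\B_r$. Continuity of $\chi$ then follows from the uniform $1/2$-contraction estimate applied to two nearby parameter values.

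The last step, establishing that $\chi$ is differentiable, is where I expect the real work to lie. Since bounded invertibility is an open condition in $B(X)$ and $D_x\Phi$ is continuous, $D_x\Phi(\chi(\mu),\mu)$ remains invertible for $\mu$ near $\mu_*$, so the only candidate for the derivative is
\be
\chi'(\mu) = -\,[D_x\Phi(\chi(\mu),\mu)]^{-1} D_\mu\Phi(\chi(\mu),\mu).
\ee
To confirm this rigorously I would substitute $x=\chi(\mu+h)$ and $x=\chi(\mu)$ into the first-order Taylor expansion of $\Phi$ supplied by the $C^1$ hypothesis, exploit $\Phi(\chi(\mu+h),\mu+h)=\Phi(\chi(\mu),\mu)=0$, and solve for $\chi(\mu+h)-\chi(\mu)$. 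The main obstacle is controlling the Taylor remainder uniformly in $h$: one must show it is $o(\|\chi(\mu+h)-\chi(\mu)\|_X + |h|)$ and then bootstrap using the already-proved continuity of $\chi$, so that $\chi(\mu+h)-\chi(\mu)\to 0$ and the remainder becomes $o(|h|)$. This uniform control of the remainder is precisely where the full strength of continuous differentiability, rather than mere pointwise differentiability of $\Phi$, is required.
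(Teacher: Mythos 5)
The paper does not actually prove Theorem \ref{theo:impl}: it is quoted as a standard, off-the-shelf version of the implicit function theorem on Banach spaces and is simply invoked to solve $\Phi(W,\ep)=0$. Your proposal is therefore not competing with an argument in the paper; it is supplying the classical contraction-mapping proof, and that proof is correct. The reduction to the fixed-point problem for $T_\mu(x)=x-A^{-1}\Phi(x,\mu)$, the smallness of $D_xT_\mu$ near $(x_*,\mu_*)$ via continuity of $D_x\Phi$, the self-map estimate, and the uniform $1/2$-Lipschitz bound giving continuity of $\chi$ are all standard and sound. Two minor points worth tightening if you were to write this out in full: (i) to conclude $\chi(\mu)$ lies in the \emph{open} ball $U$ you should use the a priori bound $\|\chi(\mu)-x_*\|\le 2\|A^{-1}\Phi(x_*,\mu)\|$, which can be made strictly less than $r$ by shrinking $M$; (ii) the differentiability step, which you correctly identify as the real work, goes through exactly as you sketch --- the $C^1$ hypothesis gives a remainder that is $o(\|\chi(\mu+h)-\chi(\mu)\|+|h|)$, and the already-established Lipschitz-type bound $\|\chi(\mu+h)-\chi(\mu)\|\le C|h|+o(|h|)$ lets you absorb it into $o(|h|)$, after which solving the linearized identity yields $\chi'(\mu)=-[D_x\Phi(\chi(\mu),\mu)]^{-1}D_\mu\Phi(\chi(\mu),\mu)$, the inverse existing by the openness of invertibility in $B(X)$. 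Nothing in your outline would fail.
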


%\begin{remark} This is one version of the implicit function theorem, and maybe not the best for what we are trying to do. But it is sufficient at this stage
% to get existence of solutions for $\ep \sim 0$. All we really need to do is reasonably explain why our mapping $\Phi$ satisfies condition (a) above, which is 
% pretty clear but may require some checking.
% 
%If we assume more about the multiplier and the nonlinearity we can get more out of the theorem, of course. I am not really sure if it is better to be very precise about the Whitham equation (in which case we can probably prove things like exponential decay of the profile and smoothness) vs being more general and having less restrictive conditions on the multiplier but with less tight results...
%
%Also: the path for doing the periodic solutions is more or less the same: you replace $\sigma_\gamma$ with a cnoidal solution of \eqref{TWE5} and 
%work in an appropriate space of periodic functions and the rest is more or less the same.
%
%\end{remark}

According to Theorem \ref{theo:impl}, the solvability of \eqref{TWE33}, that is $\Phi(W, \ep)=0$, holds. Indeed, by our previous considerations, $\Phi(\sigma, 0)=0$, the functional $\Phi: H^1_{even}(\R)\times \rone\to H^1_{even}(\R), s>\frac{1}{2}$ is continuously differentiable. In addition, $\ck=D_W\Phi(\sigma,0): H^1_{even}(\rone)\to H^1_{even}(\rone)$ is invertible, according to Lemma \ref{le:19}. This gives a family of solutions, say $W_\ep\in H^{1}_{even}(\rone)$, at least in a small neighborhood of $\ep \in (0, \ep_0)$, $\ep_0<1$. 
That $W_\ep - \sigma$ is $O_{H^1(\R)}(\ep^2)$ follows in routine way from \eqref{TWE33}, \eqref{limit} and \eqref{rho est}.
This finishes the proof of the existence part of Theorem \ref{theo:10}.  

{\bf Remark:} Note that with the current assumptions on $m$, one cannot obtain a higher regularity results on $W_\ep$, since the operator $ \left( m(0) - {1 \over 2} m''(0) \ep^2 - L_\ep \right)^{-1}$ cannot be guaranteed to be smoothing\footnote{and in fact, for the Whitham example, where $m(k)=\sqrt{\frac{\tanh(k)}{k}}$ it is not smoothing}.  
We can however claim higher regularity, by essentially the same arguments as above, once we know a higher regularity of the remainder term 
$\rho_\ep(z)=\frac{n(\ep^2 z)-\frac{n''(0)}{2} \ep^4 z^2}{\ep^3}$ or, what is the same, a higher regularity of the nonlinearity $n$. Indeed, assuming $n\in C^{l+2,1}(\rone)$, we obtain $\rho \in C^{l,1}(\rone)$ and then, we can claim  that the map $\Phi: H^l(\rone)\times \rone\to H^l(\rone)$ is continuously differentiable. Since $\ck$ will also be invertible on $H^l(\rone)$, an application of the implicit function theorem will produce a solution $W_\ep\in H^l(\rone)$.

\subsection{Existence of periodic traveling waves}
\label{sec:2.3} 
Return attention to \eqref{TWE5}. In addition to the solitary wave solution $\sigma(X)$, this equation has a one-parameter family of even periodic solutions.  
While there are explicit formulas available for these solutions (\cite{FM}) in terms of the elliptic functions ``${\textrm{cn}}$" (hence the nomenclature ``cnoidal" waves) we 
do not need these formulas here.  Instead, we summarize the properties of such waves.
\begin{theorem}\label{cnoidal}
For all $\gamma > 0$ there exists $P_0>0$ and a family functions $\left\{\phi_P(x)\right\}_{P>P_0}$  with the following properties
\begin{enumerate}
\item $\phi_P(x)$ is $C^\infty$, non-constant and even.
\item $\phi_P(x)$ is periodic with principal period $2P$.
\item $W(x) = \phi_P(x)$ solves \eqref{TWE5} (and thus \eqref{TWE4})
\item The  kernel of $$
\L_P := - \partial_x^2 + 1- 2 \gamma \phi_P
$$
(as an operator in $H^s(\T_P)$)
is exactly $\spn\left\{\phi_P'(x) \right\}$.
\end{enumerate}
\end{theorem}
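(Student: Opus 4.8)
The plan is to treat \eqref{TWE5}, $-W''+W-\gamma W^2=0$, as the conservative Newtonian system $W''=W-\gamma W^2=-V'(W)$ with $V(W)=-\tfrac12 W^2+\tfrac{\gamma}{3}W^3$, and to read everything off its phase portrait. There is a saddle at $W=0$ and a center at $W=1/\gamma$ (since $V''(1/\gamma)=1>0$), and the level sets of $E=\tfrac12(W')^2+V(W)$ foliate the interior of the homoclinic loop (whose apex is the soliton value $\sigma(0)=3/(2\gamma)$) by a one-parameter family of closed orbits. Parametrizing these by the maximum $a=W(0)\in(1/\gamma,3/(2\gamma))$, each orbit is $C^\infty$ (smooth autonomous ODE), non-constant, and may be taken even: placing the maximum at $x=0$ forces $W'(0)=0$, and the reversibility $x\mapsto -x$ of the equation together with uniqueness for the initial value problem gives $W(-x)=W(x)$. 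Let $T(a)$ be the minimal period. Linearizing at the center gives $v''+v=0$, so $T(a)\to 2\pi$ as $a\downarrow 1/\gamma$, while $T(a)\to\infty$ as $a\uparrow 3/(2\gamma)$; since $T$ is continuous, the intermediate value theorem attains every value in $(2\pi,\infty)$. Setting $P_0:=\pi$, for each $P>P_0$ there is $a=a(P)$ with $T(a)=2P$, and $\phi_P:=W(\cdot\,;a(P))$ satisfies properties (1)--(3).

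The substance is property (4). Differentiating the profile equation in $x$ gives $\L_P\phi_P'=0$, so $\phi_P'\in\ker\L_P$; it is odd, nonzero, and has exactly two simple zeros per period, at the maximum $x=0$ and the minimum $x=P$ where $\phi_P''=\phi_P(1-\gamma\phi_P)\neq0$. Since $\phi_P$ is even and $2P$-periodic I would split $\ker\L_P$ into its odd and even parts, which on the half-period $[0,P]$ correspond respectively to Dirichlet and Neumann problems for $\L_P$. On the odd (Dirichlet) side $\phi_P'$ vanishes at $0$ and $P$ with no interior zero, so by Sturm oscillation theory it is the ground state and $0$ is the lowest, hence simple, Dirichlet eigenvalue; thus the odd part of $\ker\L_P$ is exactly $\spn\{\phi_P'\}$.

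For the even (Neumann) part I would use a secular-growth argument tied to the period. Put $\psi(x):=\partial_a W(x;a)$ at $a=a(P)$; differentiating the profile equation and the data $W(0;a)=a$, $W'(0;a)=0$ in $a$ gives $\L_P\psi=0$ with $\psi(0)=1$, $\psi'(0)=0$, so $\psi$ is, up to scalar, the unique even solution of $\L_P g=0$. Differentiating the identity $W(x+T(a);a)=W(x;a)$ in $a$ and using that $W'$ is $T$-periodic yields
\be
\psi(x+T(a))=\psi(x)-T'(a)\,\phi_P'(x),
\ee
so $\psi$ is $2P$-periodic precisely when $T'(a)=0$. Hence, provided $T'(a)\neq0$, there is no nonzero even periodic element of the kernel, and combining with the odd part gives $\ker\L_P=\spn\{\phi_P'\}$, as claimed; the same monotonicity also yields the uniqueness of $\phi_P$ invoked in Theorem \ref{theo:10P}.

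The main obstacle is exactly this monotonicity: establishing $T'(a)\neq0$ on the relevant range. The cleanest route is to show the period function of the cubic potential $V$ is strictly increasing on $(1/\gamma,3/(2\gamma))$, either via a monotonicity criterion for period functions of potential systems (comparing $V$ to a reference profile, in the spirit of Chicone), or, if one accepts the elliptic-function representation of \cite{FM}, by reading the period off as a monotone function of the elliptic modulus. Everything else reduces to standard phase-plane analysis and Sturm--Liouville theory, since the smooth coefficients of $\L_P$ make its $H^s(\T_P)$ kernel independent of $s$ and identical to the space of classical $2P$-periodic solutions of $\L_P y=0$.
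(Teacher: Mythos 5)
The paper itself offers no proof of Theorem \ref{cnoidal}: it explicitly declines to use the elliptic-function formulas and imports the statement from \cite{FM}, where the cnoidal profiles are written in terms of $\mathrm{cn}$ and the nondegeneracy in part (4) is their Lemma 5.1. Your phase-plane construction is therefore a genuinely self-contained alternative, and parts (1)--(3) of it are correct and complete: the saddle at $0$ and center at $1/\gamma$, the homoclinic loop with apex $3/(2\gamma)$, evenness via reversibility plus uniqueness for the initial value problem, and the period limits $2\pi$ (at the center) and $\infty$ (at the loop) combined with continuity and the intermediate value theorem to hit every $2P>2\pi$. Your reduction of part (4) is also the right skeleton: the parity splitting of $\ker\L_P$, Sturm theory on the half-period Dirichlet problem to show the odd part is exactly $\spn\{\phi_P'\}$ (using that $\phi_P'<0$ on $(0,P)$), and the secular identity $\psi(x+T(a))=\psi(x)-T'(a)\phi_P'(x)$ for the normalized even solution $\psi=\partial_a W$, which kills the even part whenever $T'(a)\neq 0$.

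The one real gap is the step you yourself flag: the strict monotonicity of the period function $T(a)$ on $(1/\gamma,3/(2\gamma))$ is not a side condition but is precisely where the content of part (4) (and of the uniqueness of $\phi_P$ asserted in Theorem \ref{theo:10P}) lives, and your proposal names two possible routes without executing either. The claim is true and classical for this cubic potential, so the proof is completable; for instance, shifting the center to the origin via $u=W-1/\gamma$ turns the potential into $\tilde V(u)=\tfrac12 u^2+\tfrac{\gamma}{3}u^3$, and Chicone's criterion (monotonicity of the period when $\tilde V/(\tilde V')^2$ is convex) can be verified by a finite computation; alternatively $T(a)$ is an explicit multiple of the complete elliptic integral $K$ evaluated at a modulus that increases with $a$, and $K$ is strictly increasing. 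Until one of these is actually carried out, your argument proves (1)--(3) but only reduces (4) to an unproven monotonicity statement, whereas the paper's route simply cites \cite{FM}, where this nondegeneracy is established.
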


This theorem tells us that $\Phi(\phi_P,0) = 0$. Our strategy for continuing such solutions to $\ep >0$ via the implicit
function theorem is not terribly  different than the one used for the localized waves above.
If we compute $\ck_P:=D_W\Phi(\phi_P,0)$ we get
$$
\ck_P= Id  - 2 \gamma (1-\partial_x^2)^{-1} \left(\phi_P \cdot \right).
$$
In \cite{FM} (their Lemma 5.1) the following is shown:
\begin{lemma}
\label{le:19P}
$\ck_P:  L^2_{even}(\T_P)\to L^2_{even}(\T_P)$ is bounded and has a bounded inverse.  Also, 
$\ck_P:  H^1_{even}(\T_P)\to H^1_{even}(\T_P)$ is bounded and invertible. 
\end{lemma}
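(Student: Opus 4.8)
The plan is to mirror the whole-line argument used for Lemma \ref{le:19}, now transferred to the torus. First I would show that $\ck_P$ is a compact perturbation of the identity. On $\T_P$ the resolvent $(1-\partial_x^2)^{-1}$ maps $L^2(\T_P)$ boundedly into $H^2(\T_P)$ (it acts as the Fourier multiplier $(1+(k\pi/P)^2)^{-1}$), and multiplication by the smooth, bounded function $\phi_P$ is bounded on $L^2(\T_P)$; composing with the compact Rellich embedding $H^2(\T_P)\hookrightarrow L^2(\T_P)$ shows that $(1-\partial_x^2)^{-1}(\phi_P\,\cdot)$ is compact on $L^2(\T_P)$, and the same reasoning with $H^3(\T_P)\hookrightarrow H^1(\T_P)$ gives compactness on $H^1(\T_P)$. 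Hence $\ck_P = Id - 2\gamma(1-\partial_x^2)^{-1}(\phi_P\,\cdot)$ is Fredholm of index zero on both spaces, and by the Fredholm alternative it suffices to prove injectivity.

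Next I would check parity invariance and reduce to the kernel of $\L_P$. Since $\phi_P$ is even (Theorem \ref{cnoidal}(1)) and the Fourier multiplier $(1-\partial_x^2)^{-1}$ commutes with the reflection $x\mapsto -x$, the operator $\ck_P$ maps $H^1_{even}(\T_P)$ into itself. Suppose $f_0\in H^1_{even}(\T_P)$ satisfies $\ck_P f_0 = 0$, i.e. $f_0 = 2\gamma(1-\partial_x^2)^{-1}(\phi_P f_0)$. Because $\phi_P f_0\in L^2(\T_P)$, the right-hand side lies in $H^2(\T_P)$, so $f_0\in H^2(\T_P)$; applying $(1-\partial_x^2)$ to both sides yields $(1-\partial_x^2)f_0 = 2\gamma\phi_P f_0$, that is $\L_P f_0 = 0$. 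This same bootstrap shows the $L^2_{even}$ and $H^1_{even}$ kernels coincide, so it is enough to treat one of them.

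Finally I would invoke the spectral input. By Theorem \ref{cnoidal}(4), $\mathrm{Ker}[\L_P]=\spn\{\phi_P'\}$. Since $\phi_P$ is even, $\phi_P'$ is odd, whereas $f_0$ is even; the only element of $\spn\{\phi_P'\}$ that is also even is $0$, so $f_0=0$. Thus $\ck_P$ is injective on the even subspaces, and Fredholmness upgrades injectivity to invertibility on both $L^2_{even}(\T_P)$ and $H^1_{even}(\T_P)$. The only genuinely nontrivial ingredient is the kernel computation in Theorem \ref{cnoidal}(4); everything else is the compactness/Fredholm/parity machinery that transfers verbatim from the whole-line case, with Rellich's theorem on $\T_P$ replacing the corresponding compactness statement on the line.
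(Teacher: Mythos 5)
Your proposal is correct and follows essentially the same route as the paper, which simply notes that Lemma \ref{le:19P} is proved exactly as Lemma \ref{le:19} (compact perturbation of the identity, Fredholm alternative, bootstrap to $\L_P f_0=0$, and the parity argument using that $\mathrm{Ker}[\L_P]=\spn\{\phi_P'\}$ consists of odd functions). The only discrepancy is cosmetic: the paper cites ``part (3)'' of Theorem \ref{cnoidal}, but the kernel statement you correctly rely on is part (4).
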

This follows from part (3) of Theorem \ref{cnoidal} and the argument is
 very much the same as the proof of Lemma \ref{le:19}. At this stage we have appeal to the implicit function theorem as above
and arrive at the conclusions of Theorem \ref{theo:10P}.

\section{Proof of Theorem \ref{theo:10} : the stability of the small Whitham waves}
Now that we have constructed the solutions $W_\ep$ for $0<\ep\ll 1$, let us address the question for their   stability. We first linearize around the traveling wave solution. 
\subsection{The linearized problem and stability}
We take the perturbation of the solution $\ep^2 W_\ep (\ep(x-\nu t))$ in the form $u=\ep^2(W_\ep (\ep(x-\nu t))+v(\ep t, \ep(x-\nu t)))$. Plugging in this ansatz in the equation \eqref{W} and ignoring terms of order $O(v^2)$ and transforming $x-\nu t\to x$,  we obtain the following linearized system
\beq
\label{a:10}
v_t+\p_x[L_\ep v - \nu v+n'(\ep^2 W_\ep) v]=0.
\eeq
Introduce  the linearized operator 
$$
\cl_\ep:=-L_\ep + \nu - n'(\ep^2 W_\ep).
$$
Passing to the time independent problem via the map  $v(t,x)\to e^{\la t} z(x)$, we arrive at the eigenvalue problem 
\beq
\label{a:20}
\p_x \cl_\ep z=\la z
\eeq
It is then time to introduce the notion of stability. 
\begin{definition}
\label{defi:10} 
We say that the traveling wave  $\ep^2 W_\ep (\ep(x-\nu t))$ is spectrally stable, if the eigenvalue problem \eqref{a:20} does not have non-trivial solutions $(\la, z): \Re\la>0,   z \in L^2(\rone)$. 

We say that the solution is orbitally (non-linearly) stable, if for every $\sigma>0$, there exists $\de=\de(\si, \ep)>0$, so that whenever $u_0\in H^1(\rone): 
\|u_0 -  \ep^2 W_\ep(\ep \cdot)\|_{H^1}<\de$, then the solution $u$, with initial data $u_0$, 
$$
\inf_{y\in \rone}\|u(t, \cdot)- \ep^2 W_\ep (\ep(\cdot+y-\nu t))\|_{H^1(\rone)}<\si.
$$
\end{definition}
Next, we discuss the instability index count theory, which gives sufficient (and in many cases necessary) conditions for stability/instability, both spectral and orbital. We mostly follow the general theory, as developed in \cite{LZ}, although earlier relevant results are available, see \cite{KKS, KKS2, KP, KS}. 
\subsection{Instability index theory}
\label{sec:3.2}
For the eigenvalue problem
\beq
\label{a:30}
\cj \cl f=\la f
\end{equation}
make the following assumptions regarding $\cl, \cj$: 
\begin{enumerate}
\item $\cl^*=\cl$, so that $L\in B(X,X^*)$ for some real Hilbert space\footnote{In the most common applications, $X=H^s, s>0$ is a Sobolev space of positive order, while $X^*=H^{-s}$ and one has  $X=D(\cl)\subset L^2 \subset X^*$}  $X$, i.e. $\dpr{\cl u}{v}: X\times X\to {\mathbf C}$ is continuous. 
\item $dim(Ker[\cl])<\infty$ and there is the $\cl$ invariant decomposition of the space $X$, 
$$
X=X_- \oplus Ker[\cl]\oplus X_+,
$$
where $dim(X_-)<\infty$, and for some $\de>0$, 
$\cl|_{X_-}\leq -\de$, $\cl|_{X_+}\geq \de>0$. 
\item   $\cj: D(\cj)\subset X^* \to X$, $\cj^*=-\cj$. 
\end{enumerate} 
Moreover,  introduce the Morse index   $n^-(\cl)=dim(X_-)$, an integer.   
 Consider the generalized eigenspace at zero for the operator $\cj \cl$, that is   
$E_0=\{u\in X: (\cj \cl)^k u=0, k\geq 1 - \textup{integer} \}$. Clearly, $Ker[\cl]$ is a (finite dimensional) subspace of $E_0$ and one can complete it: 
$E_0=Ker[\cl]\oplus \tilde{E}_0$. Then, 
$$
k_0^{\leq 0}:=\max\{dim(Z): Z\  \textup{subspace of}\  \tilde{E}_0: \dpr{\cl z}{z}\leq 0, z\in Z\}.
$$
Under these assumptions, it was proved (see Theorem 2.3, \cite{LZ}) that\footnote{A much more precise result is contained in  Theorem 2.3, \cite{LZ}, but we state this corollary, as it is enough for our purposes} 
\begin{equation}
\label{a:40}
k_{unstable}\leq n^-(\cl)- k_0^{\leq 0}(\cl). 
\end{equation} 
where $k_{unstable}$ is the number of (non-trivial) unstable solutions to \eqref{a:30}, that is pairs  $(\la, z)$ with $\Re\la>0, z \in X$. 
In the next section, we apply this theory to the linearized problem \eqref{a:20}. 
 \subsection{Stability analysis for the small Whitham waves} 
 \label{sec:3.3} 
 For the eigenvalue problem \eqref{a:20}, we have $\cj=\p_x$, which is anti self-adjoint,  while clearly $\cl_\ep: \cl^*_\ep=\cl_\ep$ is a bounded symmetric operator, if we assume the boundedness of its symbol $m$. 
 
 We will establish below that $\cl_\ep$ has, at least for small enough values of $\ep$,  a single and simple negative eigenvalue (i.e. $n^-(\cl_\ep)=1$), while its kernel is one dimensional and it is in fact spanned by $W'_\ep$. Assuming that for the moment, let us proceed to establish a sufficient condition for the stability. 
 According to \eqref{a:40}, $k_{unstable}\leq 1- k_0^{\leq 0}$. Thus,  the stability of the solitary waves 
 $\ep^2 W_\ep(\ep x)$, will be established, once we show that\footnote{and hence $k_0^{\leq 0}(\cl_\ep)=1$, since the left hand side of \eqref{a:40} is non-negative.}
 $k_0^{\leq 0}(\cl_\ep)\geq 1$. 
 
 To this end, we can  identify an element in $gKer(\p_x \cl_\ep)\setminus Ker[\p_x \cl_\ep]$. Note that  $Ker[\p_x \cl_\ep]=Ker[\cl_\ep]=span\{W'_\ep\}$. In addition, 
 $W_\ep \perp  W'_\ep $, whence $W_\ep \perp Ker[\cl_\ep]$. Thus,  $\Psi_\ep:=\cl_\ep^{-1}[W_\ep]$ is well-defined.  Since,  
 $$
 (\p_x \cl_\ep)^2[\Psi_\ep]= \p_x \cl_\ep \p_x[W_\ep ]= \p_x \cl_\ep[W'_\ep]=0, 
 $$
we have that $\Psi_\ep \in gKer(\p_x \cl_\ep)\setminus Ker[\p_x \cl_\ep]$. According to the definition of $k_0^{\leq 0}(\cl_\ep)$, we will have established $k_0^{\leq 0}(\cl_\ep)\geq 1$, once we verify that 
$$
0>\dpr{\cl_\ep \Psi_\ep}{\Psi_\ep}=\dpr{\cl_\ep^{-1}[W_\ep]}{\ep^2 W_\ep}.
$$
Thus, we will need to verify the negativity of the Vakhitov-Kolokolov type quantity 
\beq
\label{a:50}
\dpr{\cl_\ep^{-1}[W_\ep]}{W_\ep}<0,
\eeq
once we check that for all small enough $\ep$, $n^-(\cl_\ep)=1$, $Ker[\cl_\ep]=span\{W'_\ep\}$. We do this in the next Lemma. 
\begin{lemma}
\label{le:a10}
There exists $\ep_0>0$ so that for all $\ep\in (0, \ep_0)$,   $n^-(\cl_\ep)=1$, $Ker[\cl_\ep]=span\{W'_\ep\}$. 
\end{lemma}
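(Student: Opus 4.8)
The plan is to prove this via a perturbation argument that transfers the known spectral structure of the limiting operator $\cl=-\p_x^2+1-2\gamma\sigma$ to $\cl_\ep$ for small $\ep$. The key observation is that the existence analysis already established that $\ep^2 W_\ep(\ep x)$ solves \eqref{TWE1}, and that under the rescaling $x\mapsto \ep x$ the operator $\cl_\ep=-L_\ep+\nu_\ep-n'(\ep^2 W_\ep)$ should, after multiplication by the appropriate factor, converge to the well-understood $\cl$. Concretely, I would first use Lemma \ref{hinge} (and the associated convergence \eqref{limit}) to show that the symbol $\ep^{-2}(\nu_\ep - m(\ep K))=\ep^{-2}(m(0)-\tfrac12 m''(0)\ep^2 - m(\ep K))$ converges to $-\tfrac12 m''(0)(1+K^2)$, so that $\ep^{-2}(-L_\ep+\nu_\ep)\to -\tfrac12 m''(0)(1-\p_x^2)$ in the appropriate operator topology. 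Combining this with $n'(\ep^2 W_\ep)=\ep^2 n''(0) W_\ep + O(\ep^4)$ and $W_\ep=\sigma+O_{H^1}(\ep^2)$, I would conclude that the suitably renormalized operator $-\tfrac{2}{m''(0)}\ep^{-2}\cl_\ep$ converges, as a bounded operator on the relevant spaces, to $\cl=-\p_x^2+1-2\gamma\sigma$.

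Next I would exploit this convergence together with the spectral gap structure of $\cl$ recorded in the text: $\cl$ has exactly one simple negative eigenvalue, a simple zero eigenvalue spanned by $\sigma'$, and is uniformly positive on the orthogonal complement. The strategy is to apply an eigenvalue-perturbation/continuity argument. Because the renormalization $-\tfrac{2}{m''(0)}\ep^{-2}$ is a positive scalar, it preserves the signs of the spectrum, so it suffices to count negative, zero, and positive spectrum of the renormalized operator. For the negative eigenvalue, I would use the fact that the negative eigenvalue of $\cl$ is isolated and simple, so by norm-resolvent (or simply operator-norm) convergence it persists to a single simple negative eigenvalue of $\cl_\ep$, giving $n^-(\cl_\ep)=1$. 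The main care needed is to rule out additional negative spectrum escaping from the essential spectrum of $L_\ep$; here I would use the uniform upper bound \eqref{upperbound} on $m$, which guarantees a uniform spectral gap $\nu_\ep - m_1 \ge c>0$ at the bottom of the symbol away from $k=0$, so that no extra negative directions can appear from large frequencies.

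For the kernel, the subtle point is that the zero eigenvalue of $\cl$ is simple but its persistence must be verified exactly, not just approximately, since perturbation alone would only place an eigenvalue of $\cl_\ep$ near zero. Here I would use the explicit symmetry: differentiating the profile equation $(\nu_\ep-L_\ep)W_\ep=n(W_\ep)$ in $x$ yields $\cl_\ep W_\ep'=0$, so $W_\ep'\in Ker[\cl_\ep]$ exactly, and $W_\ep'\ne 0$. Combined with the perturbation argument showing that $\cl_\ep$ has at most one eigenvalue in a neighborhood of zero (the renormalized limit being $\cl$ with a one-dimensional kernel and a uniform gap to the rest of the spectrum), this forces $Ker[\cl_\ep]=\spn\{W_\ep'\}$ and establishes that zero is simple. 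I expect the \emph{main obstacle} to be making the operator convergence and the consequent spectral-perturbation statement uniform and rigorous despite $L_\ep$ being merely bounded (not smoothing): the essential spectrum of $\cl_\ep$ sits near $\nu_\ep-m_1$ rather than at $+\infty$, so one cannot invoke compactness as freely as for $\cl$. I would handle this by working on the frequency side, decomposing into low frequencies (where the symbol behaves like the KdV operator and Lemma \ref{hinge} controls everything) and high frequencies (where \eqref{upperbound} provides a uniform positive gap), and arguing that negative and near-zero spectrum can only come from the low-frequency regime where the convergence to $\cl$ is valid.
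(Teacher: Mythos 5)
Your strategy is the same as the paper's in all essential respects: renormalize by $\ep^{-2}$, use Lemma \ref{hinge} and the expansion $n'(\ep^2 W_\ep)=n''(0)\ep^2\sigma+O_{H^1}(\ep^4)$ to compare with $-\tfrac{m''(0)}{2}\cl$, count the negative and near-zero spectrum by perturbation, and then pin the kernel exactly with the identity $\cl_\ep W_\ep'=0$ obtained by differentiating the profile equation. The one place where your write-up and the paper genuinely diverge is the formalization of the convergence, and here your literal claim --- that $-\tfrac{2}{m''(0)}\ep^{-2}\cl_\ep$ converges to $\cl$ ``as a bounded operator on the relevant spaces'' --- cannot be right as stated: for each fixed $\ep$ the operator $\ep^{-2}\cl_\ep$ is bounded on $L^2$ (since $m$ is bounded), while $\cl$ is unbounded, so no operator-norm convergence is possible, and even a graph/strong convergence statement would not by itself give the uniform eigenvalue control you need. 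You flag this as the main obstacle and propose a low/high frequency splitting; that route can be made to work, but the paper resolves the issue more cleanly by shifting and inverting: it constructs $(\ep^{-2}\cl_\ep+\mu)^{-1}$ for a large fixed $\mu$, proves the norm-resolvent identity $(\ep^{-2}\cl_\ep+\mu)^{-1}=\bigl(-\tfrac{m''(0)}{2}\cl+\mu\bigr)^{-1}+O_{B(L^2)}(\ep^2)$ (equation \eqref{a:80}), and then runs the min-max characterization on the \emph{resolvents}, where everything is bounded and the $O(\ep^2)$ perturbation moves each eigenvalue by $O(\ep^2)$. This automatically handles the high-frequency/essential-spectrum issue you worry about (the bound \eqref{upperbound} enters through Lemma \ref{hinge} in establishing \eqref{a:80}), so no separate frequency decomposition is needed. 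Your treatment of the kernel --- at most one eigenvalue near zero by perturbation, plus the exact element $W_\ep'$ --- is precisely the paper's final step \eqref{a:100}.
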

\begin{proof}
Start by taking a sufficiently large $\mu>0$, to be specified later.   We will construct the operator  
$\left(\ep^{-2} \cl_\ep+\mu\right)^{-1}$ for all small enough $\ep$. Indeed, since 
$$
n'(\ep^2 W_\ep)= n''(0)\ep^2 W_\ep+O_{H^1}(\ep^4)=n''(0)\ep^2 \sigma+O_{H^1}(\ep^4),
$$
where $\sigma$ is the explicit $sech^2$ function, see \eqref{a:110}. We have  
\begin{eqnarray*}
 \ep^{-2} \cl_\ep+\mu &=& \ep^{-2}[\cl_\ep+\mu \ep^2]=\ep^{-2}[-L_\ep+\nu-\ep^2 n''(0) \sigma+\mu \ep^2+O_{H^1}(\ep^4)]=\\
 &=&  [Id -[ n''(0) \sigma-\mu +O_{H^1}(\ep^2)] \ep^2 (\nu-L_\ep)^{-1}]\ep^{-2}(\nu-L_\ep). 
\end{eqnarray*}
Recall now that the operator $\ep^{2} (\nu-L_\ep)^{-1}$ is associated with the multiplier $\frac{\ep^2}{  m(0)-{1 \over 2} m''(0) \ep^2-m(\ep k)}$. So, according to Lemma \ref{hinge} (and more precisely \eqref{mult est}), 
\beq
\label{a:70}
\ep^{2} (\nu-L_\ep)^{-1}=-\frac{2}{m''(0)} (1-\p_x^2)^{-1}+O_{B(L^2)}(\ep^2).
\eeq
Thus, 
\begin{eqnarray*}
 \ep^{-2} \cl_\ep+\mu  &=&   \left(Id+\frac{2}{m''(0)} [n''(0) \sigma-\mu+O_{H^1}(\ep^2)](1-\p_x^2)^{-1}\right)\ep^{-2}(\nu-L_\ep) =\\
 &=&  \left(Id +2[-\ga \sigma -\frac{\mu}{m''(0)}+O_{H^1}(\ep^2)](1-\p_x^2)^{-1}\right)\ep^{-2}(\nu-L_\ep). 
\end{eqnarray*} 
Note however 
\begin{eqnarray*}
\cl-\frac{2 \mu}{m''(0)}+O_{H^1}(\ep^2) &=& 1-\p_x^2-2\ga \sigma -\frac{2 \mu}{m''(0)}+O_{H^1}(\ep^2)=\\
&=& \left[Id +2[-\ga \sigma -\frac{\mu}{m''(0)}+O_{H^1}(\ep^2)](1-\p_x^2)^{-1}\right](1-\p_x^2). 
\end{eqnarray*} 
Now, we select $\mu>0$ large and $\ep\ll1$, so that $ \cl-\frac{2 \mu}{m''(0)}+O_{H^1}(\ep^2) $ is invertible. This is possible, since $-\frac{2 \mu}{m''(0)}>0$ and $\cl$ is bounded from below\footnote{and in fact it has a single negative eigenvalue}. Moreover, $( \cl-\frac{2 \mu}{m''(0)}+O_{H^1}(\ep^2))^{-1}: L^2\to H^{2}$. Thus, we can write 
\begin{eqnarray*}
\left[Id +2[-\ga \sigma -\frac{\mu}{m''(0)}+O_{H^1}(\ep^2)](1-\p_x^2)^{-1}\right]^{-1}= (1-\p_x^2)( \cl-\frac{2 \mu}{m''(0)}+O_{H^1}(\ep^2))^{-1}:L^2\to L^2.
\end{eqnarray*} 
Hence, we can invert (by means of the previous formula and \eqref{a:70})
\begin{eqnarray*}
(\ep^{-2} \cl_\ep+\mu)^{-1} &=& \ep^{2} (\nu-L_\ep)^{-1} \left[Id +2[-\ga \sigma -\frac{\mu}{m''(0)}+O_{H^1}(\ep^2)](1-\p_x^2)^{-1}\right]^{-1}= \\
&=& 
\left( -\frac{2}{m''(0)} (1-\p_x^2)^{-1}+O_{B(L^2)}(\ep^2) \right) (1-\p_x^2)\left( \cl-\frac{2 \mu}{m''(0)}+O_{H^1}(\ep^2)\right)^{-1}=\\
&=& -\frac{2}{m''(0)} \left( \cl-\frac{2 \mu}{m''(0)}\right)^{-1}+O_{B(L^2)}(\ep^2). 
\end{eqnarray*} 
That is, 
\beq
\label{a:80}
(\ep^{-2} \cl_\ep+\mu)^{-1} =   \left( -\frac{m''(0)}{2} \cl+\mu \right)^{-1}+O_{B(L^2)}(\ep^2). 
\eeq
We can now use this  formula to study the spectrum of $\cl_\ep$. Using  min-max formulas for the eigenvalues of self-adjoint operators, we claim  that 
\beq
\label{a:90}
\la_{\max}((\ep^{-2} \cl_\ep+\mu)^{-1})=\la_{\max}\left(( -\frac{m''(0)}{2} \cl+\mu )^{-1}+O_{B(L^2)}(\ep^2)\right)>\frac{1}{\mu}
\eeq
for all small enough $\ep$.  

Indeed,  denoting the negative eigenvalue of $\cl$ by $-\si_0^2: \cl \psi_0=-\si_0^2 \psi_0, \|\psi_0\|=1$, we have that 
\begin{eqnarray*}
\la_{\max}\left(( -\frac{m''(0)}{2} \cl+\mu )^{-1}\right) &=& \sup_{f: \|f\|=1} \dpr{(-\frac{m''(0)}{2} \cl+\mu )^{-1} f}{f} \geq \dpr{(-\frac{m''(0)}{2} \cl+\mu )^{-1} \psi_0}{\psi_0}=\\
&=& \frac{1}{-\frac{m''(0)}{2} (-\si_0^2)+\mu}>\frac{1}{\mu}. 
\end{eqnarray*} 
It follows that for all small enough $\ep$, $\la_{\max}((\ep^{-2} \cl_\ep+\mu)^{-1})>\frac{1}{\mu}$, or equivalently, $\ep^{-2} \cl_\ep$ 
has the smallest eigenvalue in the form $\la_0(\ep^{-2} \cl_\ep):=\frac{1}{\la_{\max}\left(( -\frac{m''(0)}{2} \cl+\mu )^{-1}\right)}-\mu+O(\ep^2)<0$.   

Take $f:f\perp \psi_0, \|f\|=1$. Since we have $\cl|_{\{\psi_0\}^\perp}\geq 0$ and $\cl[\sigma']=0$, 
$$
\frac{1}{\mu}=\dpr{(-\frac{m''(0)}{2} \cl+\mu )^{-1} \frac{\sigma'}{\|\sigma'\|}}{\frac{\sigma'}{\|\sigma'\|}} \leq \sup_{f\perp \psi_0, \|f\|=1} \dpr{(-\frac{m''(0)}{2} \cl+\mu )^{-1} f}{f} \leq \frac{1}{\mu}
$$
It follows that $\la_1((-\frac{m''(0)}{2} \cl+\mu )^{-1})=\frac{1}{\mu}$, whence  the second smallest  eigenvalue for $(\ep^{-2} \cl_\ep+\mu)^{-1}$ is of the form $\frac{1}{\mu}+O(\ep^2)$. Equivalently, the second smallest eigenvalue for 
$\ep^{-2} \cl_\ep$ is $\la_1(\ep^{-2} \cl_\ep)=O(\ep^2)$. 

Further, according to the spectral information for $\cl$, its  second eigenvalue is also simple, in particular, $\cl|_{span\{\psi_0, \sigma'\}^{\perp}}\geq \de Id>0$. Therefore, 
$$
 \sup_{f\perp \psi_0, f\perp \sigma', \|f\|=1} \dpr{(-\frac{m''(0)}{2} \cl+\mu )^{-1} f}{f} \leq \frac{1}{-\de \frac{m''(0)}{2} +\mu}. 
$$
Equivalently, 
$$
\la_2(\ep^{-2} \cl_\ep)\geq   -\de \frac{m''(0)}{2} +O(\ep^2) >0.
$$
All in all, we have shown 
\beq 
\label{a:100}
\la_0(\ep^{-2} \cl_\ep)<0, \ \ \la_1(\ep^{-2} \cl_\ep)=O(\ep^2), \ \ \la_2(\ep^{-2} \cl_\ep)\geq   -\de \frac{m''(0)}{2} +O(\ep^2). 
\eeq
A direct differentiation in $x$  in the profile equation \eqref{TWE2} shows that $[\nu-L_\ep-n'(\ep^2 W)]W'=0$ or equivalently, $0\in \sigma(\cl_\ep)$. 
This, combined with \eqref{a:100} shows that $\la_1(\ep^{-2} \cl_\ep)=0$. This finishes the proof of Lemma \ref{le:a10}. 
\end{proof}
It remains to finally verify \eqref{a:50}. Now that we know that $Ker[\cl_\ep]=span\{W_\ep'\}$, we conclude that $\cl_\ep$ is invertible on the even subspace 
$L^2_{even}$.  In fact, we may use the formula \eqref{a:80} with $\mu=0$. In addition, from \eqref{107}, we have 
\begin{eqnarray*}
\dpr{\cl_\ep^{-1}[W_\ep]}{W_\ep} &=& -\frac{2}{m''(0)} \ep^{-2} \dpr{ (\cl^{-1}+O_{B(L^2)}(\ep^2)[\sigma+O_{H^1}(\ep^2)}{\sigma+O_{H^1}(\ep^2)}=\\
&=& -\frac{2}{m''(0)} \ep^{-2}[\dpr{\cl^{-1} \sigma}{\sigma}+O(\ep^2)]. 
\end{eqnarray*}
The quantity $\dpr{\cl^{-1} \sigma}{\sigma}$ is well-known in the theory of stability for the corresponding KdV/NLS models. Its negativity is exactly in the same way equivalent to the (well-known) stability of the corresponding traveling/standing waves. It actually may be computed explicitly as follows. 

Consider \eqref{TWE5} and a function $W_\la:=\la^2 \sigma(\la \cdot), \la>0$. This solves
$$
-W_\la''+\la^2 W_\la - \ga W_\la^2=0.
$$
Taking a derivative in $\la$ and evaluating at $\la=1$ yields 
$$
\cl[\frac{d}{d\la} W_\la|_{\la=1}]=-2\sigma
$$
Thus, $\cl^{-1} \sigma=-\frac{1}{2} \frac{d}{d\la} W_\la|_{\la=1}= -\frac{1}{2}(2\sigma+x \sigma')$. It follows that 
$$
\dpr{\cl^{-1} \sigma}{\sigma}=-\frac{1}{2} \dpr{2\sigma+x \sigma'}{\sigma}=-\frac{3}{4} \|\sigma\|^2<0.
$$
Thus, the Vakhitov-Kolokolov condition \eqref{a:50} is verified and the proof of Theorem \ref{theo:10} is complete.

 \subsection{Stability of the periodic waves}
 
 The stability calculation for the periodic waves proceed in an identical fashion. The eignevalue problem is in the form \eqref{a:20}, where now the operators are acting on the corresponding periodic spaces  $H^s(\T_P)$.  In fact, noting  that for $\la\neq 0$, the right hand side $z$ is an exact derivative, 
  allows us to restrict the consideration of  \eqref{a:20} to the space $L^2_0(\T_P)=\{f\in L^2(\T_P): \int_{-P}^P f(x) dx=0\}$. The advantage of this is that now $\cj=\p_x$ is boundedly invertible, hence allowing for the results of \cite{KP} to kick in. In particular, spectral stability and non-degeneracy do imply orbital stability. 
  
   The instability index theory outlined in Section \ref{sec:3.2} applies. According to \eqref{a:40} and the analysis in Section \ref{sec:3.3} - \eqref{a:50} implies the  spectral stability. Moreover, Lemma \ref{le:a10} applies as well to the periodic waves. That is, the Morse index of $\cl_\epsilon$ is one and the wave is non-degenerate, in the sense that $Ker[\cl_\epsilon]=span[W'_{P,\epsilon}]$. The verification of \eqref{a:50} is reduced, in the same way, to the verification of the inequality $\dpr{\cl^{-1}_P \phi_P}{\phi_P}<0$. This quantity can be computed fairly precisely, in terms of elliptic functions,  but we will not do so here.  
   
   Instead, we remark that Theorem \ref{cnoidal} sets up the spectral/orbital stability of the waves $\Phi_P$ of the periodic KdV model back to the same quantity. That is, the spectral stability of $\Phi_P$ is equivalent to $\dpr{\cl^{-1}_P \phi_P}{\phi_P}<0$.  Since it is well-known that $\Phi_P$ are stable with respect to co-periodic perturbations\footnote{in fact, much more is known, namely the cnoidal waves are stable with respect to harmonic perturbations - that is perturbations with periods $2m P, m=1,2, \ldots$,  \cite{BD}, \cite{DK}}, see for example \cite{BD}, \cite{DK}, it follows that $\dpr{\cl^{-1}_P \phi_P}{\phi_P}<0$. By the invertibility of $\p_x$ and the non-degeneracy of $\cl_{P,\epsilon}$, we also conclude orbital stability for $W_{P,\epsilon}$.

\appendix 

\section{Assorted Proofs}\label{assorted proofs}
\begin{proof} {\it (Lemma \ref{hinge})}
 Take $K>k_*/\ep$. Then we clearly have
$$
\left \vert  {1 \over {1 \over 2} m''(0)(1+ K^2)}\right \vert \le {2 \ep^2 \over |m''(0)|k_*} \le C\ep^2.
$$
Since $K>k_* /\ep$, we know that $m(\ep K) \le m_1$ by \eqref{upperbound}. Thus we have
$$
m(0) - {1 \over 2} m''(0) \ep^2 - m(\ep K) > m(0) - m_1 > 0.
$$
Here we have used that face that $m''(0) < 0$, which is implied by \eqref{m2 bound}. This in turn implies:
$$
\left \vert  {\ep^2 \over m(0)-{1 \over 2} m''(0) \ep^2 - m(\ep K)} \right \vert \le {\ep^2 \over m(0) - m_1} \le C \ep^2.
$$
The triangle inequality gives:
\be\label{big K}
\sup_{|K| \ge k_*/\ep} \left \vert {\ep^2 \over m(0)-{1 \over 2} m''(0) \ep^2 - m(\ep K)} + {1 \over {1 \over 2}m''(0)(1+ K^2)}\right \vert  \le C\ep^2.
\ee

Now suppose that $|K| \le k_*/\ep$. We have
\begin{multline}\label{thing}
{\ep^2 \over m(0)- {1 \over 2}m''(0) \ep^2 - m(\ep K)} + {1 \over  {1 \over 2}m''(0)(1+ K^2)} \\= { m(0) +  {1 \over 2}m''(0) \ep^2 K^2- m(\ep K) \over
[m(0)- {1 \over 2}m''(0) \ep^2 - m(\ep K)][ {1 \over 2}m''(0)(1+ K^2)]}
\end{multline}
The fact that $m$ is even and $C^{3,1}$ implies, by way of Taylor's theorem, that there exists $C>0$ such that
$$
 |m(0) +  {1 \over 2}m''(0) k^2- m(k)|  \le C{k^4}
$$
when $|k| \le k_*$. This implies that
\be\label{top}
 |m(0) +  {1 \over 2}m''(0) \ep^2 K^2- m(\ep K)| \le C \ep^4 K^4
\ee
when $|K| \le k_*/\ep$.

The fundamental theorem of calculus implies that 
$$
m(0)- {1 \over 2}m''(0) \ep^2 - m(k) = -{1 \over 2} m''(0) \ep^2 - \int_0^k \int_0^s m''(\sigma) d\sigma ds.
$$
Here we used the fact that $m(k)$ is even.
Then we use \eqref{m2 bound} to see that
$$
m(0)- {1 \over 2}m''(0) \ep^2 - m(k) \ge -{1 \over 2} m''(0) \ep^2 - {1 \over 2} m_2 k^2
$$
so long as $|k| \le k_*$. Thus, for $|K| \le k_*/\ep$ we have
$$
m(0)- {1 \over 2}m''(0) \ep^2 - m(\ep K) \ge \ep^2\left(-{1 \over 2} m''(0)  - {1 \over 2} m_2 K^2\right)
$$
Since $m''(0)$ and $m_2$ are both negative this implies:
\be\label{bot1}
|m(0)- {1 \over 2}m''(0) \ep^2 - m(\ep K)| \ge C\ep^2\left(1+ K^2\right)
\ee
when $|K| \le k_*/\ep$.

Thus we can control the left hand side of  \eqref{thing} using \eqref{top} and \eqref{bot1} as:
$$
\left \vert {\ep^2 \over m(0)- {1 \over 2}m''(0) \ep^2 - m(\ep K)} + {1 \over  {1 \over 2}m''(0)(1+ K^2)} \right \vert  \le
{C \ep^2 K^4 \over (1+K^2)^2}
$$
when $K\le k_*/\ep$. 
Since $K^4/(1+K^2)^2 \le 1$ we have 
$$
\sup_{|K| \le k_*/\ep} \left \vert {\ep^2 \over m(0)- {1 \over 2}m''(0) \ep^2 - m(\ep K)} + {1 \over  {1 \over 2}m''(0)(1+ K^2)} \right \vert \le C\ep^2
$$

\end{proof}

\end{document}